\newtheorem{theorem}{Theorem}[section]
\newtheorem{lemma}[theorem]{Lemma}
\theoremstyle{definition}
\theoremstyle{remark}
\numberwithin{equation}{section}
\begin{document}

\title{A p-adic identity for Wieferich primes}

\author{Kok Seng Chua}
\email{chuakkss52@outlook.com}

\subjclass[2020]{Primary : 11A05, Secondary : 11A51}



\keywords{p-adic valuation, Wieferich criterion, Fermmat last theorem}

\begin{abstract}Let $n$ be a positive integer, $p$ be an odd prime and integers $a,b \not= 0$  with $gcd(a,b)=1$, $p \nmid ab$, and $p|(a^n \pm b^n)$, we prove the identity
$$\nu_p(a^n \pm b^n)-\nu_p(n)=\nu_p(a^{p-1}-b^{p-1}).$$
An unintended interesting immediate consequence is the following  variant of Wieferich's criterion for FLT : Let $x^n+y^n=z^n$ with $n$ prime and  $x,y,z$ pairwise relatively prime. Then every odd prime $p|y$ satisfies $\nu_p(z^{p-1}-x^{p-1}) \ge n-1$ and every odd prime $p|x$ satisfies $\nu_p(z^{p-1}-y^{p-1}) \ge n-1$, and every odd prime $p|z$ satisfies $\nu_p(x^{p-1}-y^{p-1}) \ge n-1$, ie. every odd prime dividing $xyz$ is a   Wieferich prime of order at least $n-1$ to some base pair. In the "first case" where $n \nmid xyz$, the lower bound for the Wieferich order can be improved to $n$. This gives us very strong intuition why FLT should be true even for moderately large $n$.
\end{abstract}

\maketitle

\section{Motivation and result} For a prime $p$ and integer $n$, we shall denote the $p$-adic valuation by  $\nu_p(n)$, the exact power of $p$  dividing $n$. Since $(a^n-1,1,a^n)$ is an abc tuple, a very weak form of the ABC conjecture \cite{M,O} implies that the average power of  $(a^n-1)a^n$, given by  $\frac{\log((a^n-1).a^n)}{\log(rad((a^n-1).a^n))}$  has to be bounded above which means for $n$ large, $a^n-1$ must have many factor occurring to the first power to compensate for $a^n$ \cite{L} so that primes dividing $a^n-1$ to high powers must be rare.

This prompts us to look at primes  which divides $(2^n-1)$ to high powers, and we immediately observed  that $p^2|(2^n-1)$ implies $p|n$ and more over the defect $\delta:=\nu_p(2^n-1)-\nu_p(n)=1$ always for the first few hundreds $n$. This seems a little too good to be true and in fact it obviously can't hold for Wieferich prime where $p^2|(2^{p-1}-1)$. Indeed the first failure occur at $n=364=ord_p(2)$, the multiplicative order of $2$ mod the first Wieferich prime $p=1093$. Further experimentations with other bases reveal that  failure can only occur for Wieferich primes and they can only occur when the defect $\delta >1$. Also  the defect does not depend on $n$  for a fixed prime, Wieferich or not.
This leads us to the following identity
\begin{theorem} Let $a \not=0,1$ be an integer, $n$ a positive integer, and  $p$  an odd prime $p \nmid a$, and $p \mid (a^n \pm 1)$, then we have
\begin{align}
\nu_p(a^n \pm 1)-\nu_p(n)=\nu_p(a^{p-1}-1).
\end{align}
The minus identity still holds for $p=2$  except when $n$ is even and $a$ is $3$ mod $4$, the RHS should be replaced with $\nu_2(a+1)$.
The plus identity for $p=2$ only holds for $n$ odd as  $\nu_2(a^n+1)=\nu_2(a+1)$.
\end{theorem}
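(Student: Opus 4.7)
The plan is to deduce the identity from two applications of the classical lifting-the-exponent (LTE) lemma for odd primes: if $p$ is an odd prime with $p \mid x-y$ and $p \nmid xy$, then $\nu_p(x^k - y^k) = \nu_p(x-y) + \nu_p(k)$ for every positive integer $k$. This is standard and is essentially the content of the theorem once the right base $x = a^d$ is chosen, where $d$ is the multiplicative order of $a$ modulo $p$.

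For the minus case with $p$ odd, let $d = \mathrm{ord}_p(a)$. The hypothesis $p \mid a^n - 1$ forces $d \mid n$, and Fermat's little theorem gives $d \mid p-1$. Writing $n = dm$ and $p-1 = dk$, two applications of LTE with base $a^d$ yield
$$\nu_p(a^n - 1) = \nu_p(a^d - 1) + \nu_p(m), \qquad \nu_p(a^{p-1} - 1) = \nu_p(a^d - 1) + \nu_p(k).$$
Since $d \mid p-1$, both $d$ and $k$ are strictly less than $p$ and hence coprime to $p$, so $\nu_p(m) = \nu_p(dm) = \nu_p(n)$ and $\nu_p(k) = 0$. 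Subtracting the two displays gives the claimed identity. The plus case with $p$ odd then reduces to the minus case via $a^{2n} - 1 = (a^n-1)(a^n+1)$: since $p$ is odd and $p \mid a^n + 1$ we have $p \nmid a^n - 1$, so $\nu_p(a^{2n}-1) = \nu_p(a^n+1)$; applying the minus identity at exponent $2n$ and using $\nu_p(2n) = \nu_p(n)$ finishes this case.

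The $p=2$ statements are handled separately. For $n$ odd the factorization $a^n + 1 = (a+1)(1 - a + a^2 - \cdots + a^{n-1})$ has an odd second factor, giving $\nu_2(a^n+1) = \nu_2(a+1)$ and explaining that the plus identity collapses. The minus identity at $p=2$ follows from the $2$-adic variant of LTE, which carries a correction term precisely when $a \equiv 3 \pmod 4$ and $n$ is even, producing the stated exception. The main obstacle is essentially bookkeeping: one must verify that $p \nmid d$ in order to replace $\nu_p(n)$ by $\nu_p(m)$, and one must separately appeal to the $2$-adic form of LTE because the odd-$p$ formula genuinely fails at $p=2$; no deeper input beyond LTE is required.
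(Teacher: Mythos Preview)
Your argument is correct and follows essentially the same route as the paper: introduce $d=\mathrm{ord}_p(a)$, use that $d\mid n$ and $d\mid p-1$ with $p\nmid d$, and then lift the exponent from $a^d-1$ to both $a^n-1$ and $a^{p-1}-1$; the plus case is reduced to the minus case via $a^{2n}-1=(a^n-1)(a^n+1)$ exactly as in the paper, and the $p=2$ analysis is the same computation packaged as the $2$-adic LTE. The only difference is cosmetic: the paper reproves the lifting step from scratch by induction on $\nu_p(n)$ using $\nu_p(\phi_p(a^u))=1$, whereas you invoke LTE as a known lemma.
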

An immediate consequence, used in the proof that ABC implies existence of infinitely many non Wieferich primes \cite{Si}, which we were not thinking about, is the well known fact that  $\nu_p(a^n-1)=1$ implies  $p$ is non Wieferich, and we now know additionally that $\nu_p(n)=0$.
 Essentially the standard known proof of this special case also give us the proof of the full case, once we know which terms to keep track
 from the identity.
\begin{proof} Assume first $p|(a^n-1)$ and let $d=ord_p(a)$ we then have $d|p-1$ and $d|n$, so that $n=dr, p-1=dm$ and $p \nmid dm$ and hence $\nu_p(n)=\nu_p(r)$.

Let $\phi_p(x)=\sum_{j=0}^{p-1}x^j$ be the $p$th cyclotomic polynomial. We note that if $k$ is any nonzero integer
$\phi_p(1+kp)=\sum_{j=0}^{p-1}(1+kp)^j = \sum_{j=0}^{p-1} (1+jkp +O(p^2))=p+(p-1)pkp/2+O(p^2)=p \;(mod \; p^2)$, so $\nu_p(\phi_p(1+kp))=1$.

We will prove the minus identity by induction on $\nu_p(n)$.
Assume first $p \nmid n$ so $p \nmid r$, and $a^{p-1}-1=a^{dm}-1=(a^d-1)t$, where $t=\sum_{j=0}^{m-1}(a^d)^j = m \not=0 $ (mod $p$),
so  $\nu_p(a^{p-1}-1)=\nu_p(a^d-1)$.  Similarly $a^n-1=a^{dr}-1=(a^d-1)t$
where $t=\sum_{j=0}^{r-1}(a^d)^j = r  \not= 0$ \; (mod $p$). So $\nu_p(a^n-1)=\nu_p(a^d-1)=\nu_p(a^{p-1}-1)$.

Now let $m \ge 0$ and $n=dp^{m+1}r$ where $p \nmid r$, and let $u=dp^mr$, then since $p|a^n-1),a^{up}-1 = (a^u-1)=0 \; (mod \; p)$,  so
$\nu_p(\phi_p(a^u))=1$. We now have  $a^n-1=a^{up}-1=(a^u-1)\phi_p(a^u),$
so $\nu_p(a^n-1)=\nu_p(a^u-1)+1=\nu_p(a^{p-1}-1)+\nu_p(u)+1=\nu_p(a^{p-1}-1)+\nu_p(n)$. This proves the minus identity.

If $p|(a^n+1)$, then $p|(a^{2n}-1)$ and $p \nmid (a^n-1)$
so that $\nu_p(a^{2n}-1)=\nu_p(a^n+1)$, and  applying the minus identity gives
$\nu_p(a^n+1)-\nu_p(n)=\nu_p(a^{p-1}-1).$
\newline

  For $p=2$, $a$ is odd,  $(a^n-1)=(a-1)t$ where $t=\sum_{j=0}^{n-1} a^j=n \not=0$ \; $(mod \; 2)$, if $n$ is odd, so
  $\nu_2(a^n-1)=\nu_2(a-1)$. If $n$ is even, let $n=2^um$, $ u \ge 1$ and $m$ odd, then $a^n-1=\prod_{j=0}^{u-1}(a^{2^jm}+1)(a^m-1).$
  If $a=1$ mod $4$, $\nu_2(a^n-1)=u+\nu_2(a^m-1)=\nu_2(n)+\nu_2(a-1)$ where we use the odd case in the last term.
  If $a=3$ mod $4$, all term in the product contribute a single factor of $2$ except  $a^m+1$, so $\nu_2(a^n-1)=\nu_2(a^m+1)+\nu_2(n)$. But as before $a^m+1=(a+1)t$ where $t=\sum_{j=0}^{m-1} \pm a^j=m \not=0$ mod $2$, so $\nu_2(a^m+1)=\nu_2(a+1)$.

  For the plus cas for $p=2$, we assume $n$ is odd, then $(a^n+1)=(a+1)t$ where $t$ is odd as before. So $\nu_2(a^n+1)+\nu_2(a+1)$.
QED
\end{proof}
The identity in Theorem 1.1 is apparently new and it has many immediate consequences. The surprised  term  $\nu_p(n)$ is  seemingly the reason why there is a separation into case I and II for FLT.

Our original observation  that if $p^2|(a^n-1)$ or $p^2|(a^n+1)$, then $p|n$ unless we have the rare case that $p$ is Wieferich,  generalizes the speciaal case for Fermat and Mesernne numbers, for example \cite{P}. Also we have  $\nu_p(a^{ord_p(a)} \pm 1)=\nu_p(a^{p-1}-1)$ if $p \nmid a$. Perhaps the most interesting and unintended  immediate consequence is a considerable strengthening of Weiferich'c criterion for FLT but we need to generalize Wieferich prime to rational or rather an 2-tuple integral base.

\subsection{Extension to rational base and a strengthened Wieferich criterion for FLT}

The proof of Theorem 1.1 still works if $a \not= 0,1$ is  rational as was done in \cite{Si}, if we use the natural extension of the valuation $\nu_p(a/b):=\nu_p(a)-\nu_p(b)$. It is simpler to write in homogenous integral coordinates. Homogenising theorem 1.1 for rational $a/b$ gives us
\begin{theorem}
Let $a \not= b$ be nonzero integers  and $gcd(a,b)=1$ and $n$ a positive integer. If $p \nmid ab$,  and $p| (a^n \pm b^n)$, we have
\begin{align} \nu_p(a^n \pm b^n)-\nu_p(n)= \nu_p(a^{p-1}-b^{p-1}).
\end{align}
Again the same formula holds in the minus case for $p=2$ except when $n$ is even and $ab=3$ mod $4$ where the RHS should be replaced with $\nu_2(a+b)$. The plus identity holds only for $p=2$ and $n$ odd as $\nu_2(a^n+b^n)=\nu_2(a+b)$.
\end{theorem}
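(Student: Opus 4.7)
The plan is to deduce Theorem 1.2 directly from Theorem 1.1 by dehomogenizing. Since $p \nmid ab$, both $a$ and $b$ are units in the ring of $p$-adic integers $\ZZ_p$, so the ratio $c := a/b$ lies in $\ZZ_p^\times$, and $\nu_p(c) = 0$. Factoring out $b^n$ and $b^{p-1}$ respectively, one has
\begin{align*}
\nu_p(a^n \pm b^n) &= n \nu_p(b) + \nu_p(c^n \pm 1) = \nu_p(c^n \pm 1), \\
\nu_p(a^{p-1} - b^{p-1}) &= (p-1)\nu_p(b) + \nu_p(c^{p-1} - 1) = \nu_p(c^{p-1} - 1),
\end{align*}
since $\nu_p(b) = 0$. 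Thus the claimed identity reduces to
$$\nu_p(c^n \pm 1) - \nu_p(n) = \nu_p(c^{p-1} - 1),$$
which is Theorem 1.1 applied to the base $c$.

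The one point that needs verification is that the proof of Theorem 1.1 carries over verbatim when $a$ is replaced by a $p$-adic unit $c \in \ZZ_p^\times$ rather than a rational integer. This is routine: the argument only manipulates the quantities $c^d - 1$, $c^n - 1$, $c^u - 1$ via the multiplicative order of $c$ modulo $p$ (which is well defined in $(\ZZ/p\ZZ)^\times = (\ZZ_p/p\ZZ_p)^\times$), together with the identity
$$\phi_p(1+kp) \equiv p \pmod{p^2} \quad \text{when } p \nmid k,$$
and the factorization $c^{up} - 1 = (c^u - 1)\phi_p(c^u)$. All of these statements hold for $c \in \ZZ_p^\times$ with identical proofs, so the induction on $\nu_p(n)$ goes through unchanged.

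For the $p = 2$ exceptional case, the same reduction applies. Since $a,b$ are both odd, $b^{-1} \equiv b \pmod 4$ (as $b^2 \equiv 1 \pmod 4$), so $c \equiv ab \pmod 4$; hence the condition $ab \equiv 3 \pmod 4$ is exactly the condition $c \equiv 3 \pmod 4$ which triggers the exceptional case in Theorem 1.1, and the corrected right-hand side $\nu_2(c+1) = \nu_2((a+b)/b) = \nu_2(a+b)$ matches the stated formula. No step is a real obstacle here; the entire content of Theorem 1.2 is the observation that Theorem 1.1 is naturally a statement about $p$-adic units, and the homogeneous form simply repackages this.
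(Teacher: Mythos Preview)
Your proposal is correct and follows essentially the same route as the paper: the paper simply remarks that the proof of Theorem~1.1 goes through for a rational base $a/b$ via the extension $\nu_p(a/b)=\nu_p(a)-\nu_p(b)$, and then homogenizes. You phrase the same reduction in terms of the $p$-adic unit $c=a/b\in\ZZ_p^\times$ rather than a rational, which is equivalent, and you supply more detail than the paper does (in particular the verification that $c\equiv ab \pmod 4$ so the $p=2$ exceptional condition translates correctly).
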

We still have the original observation that if $p^2|(a^n \pm b^n)$  then $p|n$ unless $p$ is Wieferich to the base $\{a,b\}$ (see below).

Even the case $n=1$ seems non obvious and we record it as
\begin{lemma} If a prime $p \ge 2$ is such that $p \nmid ab$ but $p|(a \pm b)$, then $$\nu_p(a^{p-1} - b^{p-1})=\nu_p(a \pm b),$$
except when $p=2$, only the minus equality hold. In particular if $p \nmid ab$ and $p|(a \pm b)$, then for all $n \ge 1$, we have

\begin{align} \nu_p(a^n \pm b^n)=\nu_p(n) +\nu_p(a \pm b). \end{align}
\end{lemma}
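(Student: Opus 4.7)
The plan is to observe that the first assertion of the lemma is the $n=1$ special case of Theorem 1.2 (since $\nu_p(1)=0$), and that the second assertion then follows by composing the first with Theorem 1.2 for general~$n$. For self-containment I would also give a short direct argument for the first assertion so that the lemma does not depend on the full induction of Theorem 1.2.

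First I would establish $\nu_p(a^{p-1} - b^{p-1}) = \nu_p(a-b)$ under the hypothesis $p \mid (a-b)$ by factoring
\[
a^{p-1} - b^{p-1} = (a-b)\sum_{j=0}^{p-2} a^{p-2-j} b^j .
\]
Since $a \equiv b \pmod p$, each of the $p-1$ summands reduces to $a^{p-2}$ modulo $p$, so the second factor is congruent to $(p-1)a^{p-2} \equiv -a^{p-2} \pmod p$, which is nonzero because $p \nmid a$. Hence the $p$-adic valuation of the product is exactly $\nu_p(a-b)$. For the plus case with odd $p$, the exponent $p-1$ is even so $(-b)^{p-1} = b^{p-1}$; substituting $-b$ for $b$ in the already-proved minus case converts the hypothesis $p \mid (a+b)$ into the desired conclusion $\nu_p(a^{p-1} - b^{p-1}) = \nu_p(a+b)$.

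For the second assertion I would apply Theorem 1.2 to get $\nu_p(a^n \pm b^n) - \nu_p(n) = \nu_p(a^{p-1} - b^{p-1})$ and then substitute the first assertion to rewrite the right-hand side as $\nu_p(a \pm b)$, yielding the claimed identity.

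The only real obstacle is the prime $p=2$. The minus equality is then automatic since $p-1=1$, but the sign-flip trick used for the plus case breaks because $(-b)^{p-1} = -b \neq b$, and explicit examples such as $(a,b)=(3,1)$, where $\nu_2(a^{p-1}-b^{p-1})=\nu_2(2)=1$ while $\nu_2(a+b)=\nu_2(4)=2$, confirm that the plus equality genuinely fails. This matches precisely the exception stated in the lemma, and no further work is needed.
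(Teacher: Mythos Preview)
Your proposal is correct and follows exactly the paper's approach: both derive the first equality by setting $n=1$ in Theorem~1.2 and the second by reapplying Theorem~1.2 for general~$n$ and substituting. Your direct factorization argument for the first assertion is a self-contained extra not in the paper; conversely the paper explicitly notes the implication $p\mid(a\pm b)\Rightarrow p\mid(a^n\pm b^n)$ needed before Theorem~1.2 can be invoked, which you leave implicit.
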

\begin{proof} First equality follow from setting $n=1$ in (1.2). The second follows from $p|(a \pm b)$ implies $p|(a^n \pm b^n)$.
\end{proof}
Note (1.3) does not hold if we only know $p|(a^n \pm b^n)$ , for example,
 $3|(5^4-4^4)$ but $2=\nu_3(5^4-4^4) \not= \nu_3(4) +\nu_3(5-4)=0$.
\newline

We now note that the RHS of (1.2), $\nu_p(a^{p-1}-b^{p-1}) \ge 1$ always, being a homogenization of little Fermat $\nu_2(a^{p-1}-1) \ge 1$. This  suggests the homogenization of Weiferich prime to a pair of bases. We say that a prime $p \nmid ab$ is a Wieferich prime to the base $(a,b)$ of order $r$ if $\nu_p(a^{p-1}-b^{p-1})=r > 1$. Clearly the usual Wieferich  prime to the base $a$ is Wieferich to the base $(1,a)$ and being Wieferich to the base $(a,b)$ is the same as to the base $(b,a)$ so we shall always assume $a<b$. Probabilistically, being Wieferich to a base $(a,b)$ is as rare as being Wieferich to a base $a$ and the probability decrease for larger prime. For example, $19,269,440297$ are Wieferich to the base $(3,13)$ all of order $2$, and there is no more for prime upto $10^8$. It is however easy to construct artificial bases for which a small prime is Wieferich of high order. For example, the expression $(3^k+t)^2-(3^k-t)^2=4t3^k$ implies $3$ is Wieferich to the base $(3^k-t,3^k+t)$ of order at least $k$.

 Wieferich prime of order $\ge 3$ are even rarer as we expect there are only finitely many such primes for each fixed base, We computed for primes upto $10^6$ for the $3043$ bases $(a,b)$ with $1 \le a <b \le 100$ with $gcd(a,b)=1$, there are $61$ base pairs with Wieferich primes  of order $ \ge 4$ and all the primes occuring are small $\le 17$, and only two base pair $(38,41),(3,79)$ where $5$ is Wieferich of order $5$.


\begin{theorem} (Higher order Wieferich criterion for FLT) Let $n$ be an odd prime and $x,y,z$ be relatively prime positive integers satisfying $x^n+y^n=z^n$. Then,
any odd prime $p \mid x$ is Wieferich to the base $(y,z)$ of order at least $\ge n -1$. Any odd prime $p \mid y$ is Wieferish to the base $(x,z)$ of order $\ge n-1$. Any odd prime $p \mid z$ is Wieferish to the base $(x,y)$ of order $\ge n-1$. For the "first case" where $n \nmid xyz$, we can increase the lower bound to $n$. Similarly for $p=2$, if $2|z$, $\nu_2(x+y) \ge n$. If $2|x$,
 $\nu_2(z-y) \ge n$. If $2|y$, $\nu_2(z-x) \ge n$.  If a $p$ dividing $x$ actually divides $z-y$, then we also have $\nu_p(z-y) \ge n-1$. Similarly for the other two cases.

\end{theorem}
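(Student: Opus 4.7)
The plan is to exploit the symmetry of the FLT equation $x^n+y^n=z^n$ by applying the identity (1.2) of Theorem 1.2 to each of its three rearrangements
\begin{align*}
x^n \;=\; z^n-y^n, \qquad y^n \;=\; z^n-x^n, \qquad z^n \;=\; x^n+y^n.
\end{align*}
Pairwise coprimality of $x,y,z$ guarantees that whenever a prime $p$ divides exactly one of them, the remaining two are coprime to $p$ and to each other, so Theorem 1.2 (or Lemma 1.3) applies to the corresponding expression.

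Concretely, suppose $p$ is an odd prime dividing $x$. Then $p^n\mid x^n=z^n-y^n$, so $\nu_p(z^n-y^n)\ge n$. Theorem 1.2 with $(a,b)=(z,y)$ gives
\begin{align*}
\nu_p(z^{p-1}-y^{p-1}) \;=\; \nu_p(z^n-y^n)-\nu_p(n) \;\ge\; n-\nu_p(n).
\end{align*}
Because $n$ is prime, $\nu_p(n)\in\{0,1\}$, which yields Wieferich order $\ge n-1$ to the base $(y,z)$; in the first case $n\nmid xyz$ one has $p\ne n$, so $\nu_p(n)=0$ and the bound improves to $n$. The cases $p\mid y$ and $p\mid z$ are entirely symmetric, using the minus identity applied to $z^n-x^n=y^n$ and the plus identity applied to $x^n+y^n=z^n$, respectively.

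For $p=2$, oddness of $n$ avoids the exceptional subcase in Theorem 1.2. If $2\mid z$, then $2^n\mid x^n+y^n$, and the plus formula for $p=2$ and $n$ odd collapses to $\nu_2(x^n+y^n)=\nu_2(x+y)$, so $\nu_2(x+y)\ge n$. Similarly $2\mid x$ (resp.\ $2\mid y$) combined with the minus identity and $\nu_2(n)=0$ yields $\nu_2(z-y)\ge n$ (resp.\ $\nu_2(z-x)\ge n$). For the supplementary remark, if an odd prime $p\mid x$ \emph{also} divides $z-y$, then the hypothesis of Lemma 1.3 holds with $(a,b)=(z,y)$, giving $\nu_p(z^n-y^n)=\nu_p(n)+\nu_p(z-y)$; combining with $\nu_p(z^n-y^n)\ge n$ forces $\nu_p(z-y)\ge n-\nu_p(n)\ge n-1$, and the analogous statements for $p\mid y$ and $p\mid z$ follow identically.

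There is no substantive obstacle: the entire argument is a uniform application of Theorem 1.2 and Lemma 1.3. The only care required is to verify in each of the three symmetric cases that the coprimality hypotheses $\gcd(a,b)=1$ and $p\nmid ab$ are indeed inherited from the pairwise coprimality of $x,y,z$, and to separate the boundary cases $p=n$ and $p=2$ cleanly so as not to lose the extra factor of $p$ in the first case.
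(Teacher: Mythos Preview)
Your proof is correct and follows essentially the same route as the paper: apply the identity of Theorem~1.2 to the three rearrangements of $x^n+y^n=z^n$, use that $n$ is prime to bound $\nu_p(n)\le 1$ (with equality only when $p=n$), handle $p=2$ via the odd-$n$ clauses of Theorem~1.2, and invoke Lemma~1.3 for the supplementary statement about $p\mid z-y$. The only cosmetic difference is that for the last part you use formula~(1.3) directly, whereas the paper cites the first equality of Lemma~1.3 together with the already-established bound on $\nu_p(z^{p-1}-y^{p-1})$; these are equivalent.
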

\begin{proof} Let $p \mid y$ , then $p \nmid xz$, and $p^n|y^n=(x^n-z^n)$. By the identity,
$$\nu_p(x^{p-1}-z^{p-1})=\nu_p(x^n-z^n)-\nu_p(n) \ge n-\nu_p(n) \ge n-1.$$
If $p|z$, $p^n|(x^n+y^n)$ and we apply the plus identity. If $n \nmid xyz$, all $\nu_p(n)=0$.

Exactly one of $x,y,z$ is even and $\nu_2(n)=0$. If $2|z$, $2^n|(x^n+y^n)$, the plus identity holds with $\nu_2(x+y)=\nu_2(x^n+y^n) \ge n$. If $2|x$, $2^n|(z^n-y^n)$, and since $n$ is odd, we must have $yz=1$ mod $4$ so the minus identity holds, and $\nu_2(z-y)=\nu_2(z^n-y^n) \ge n$.

Among the primes dividing $x$, there must be some which also divide $z-y$ since $p|x^n=(z^n-y^n)=(z-y)(z^{n-1}+...+y^{n-1})$. By Lemma 1.3,  we have $\nu_p(z-y)=\nu_p(z^{p-1}-y^{p-1}) \ge n-1$. 
\end{proof}

The results still holds for $n=2$ where we have solution to verify. It state that if $x^2+y^2=z^2$ and $p$ is an odd prime dividing $xyz$ then $p$ is Wieferich to a base pair (complementary to the one it divides). This seems interesting since we know how to generate all Pythagorian tuples. We have for example $38399^2+2042040^2=2042401^2$, the $z$-term $p=2042401$ turn out to be a prime so we have
$2042401^2|(2042040^{2042400}-38399^{2042400})$.

The condition in Theorem 1.4 are so strong that it gives us very convincing intuition why FLT should fail for even moderately large $n$
with increasing improbability  for larger $n$. The natural question is how can we turn this into a proof for all sufficiently large $n$ ?


\subsection{Relevance to the $(a^n-1,1,a^n)$ abc tuple}  A weak form of the ABC conjecture applied to the tuple $(a^n-1)+1=a^n$ states essentially that the average power of $a^n-1$, $X_a:=\{ \frac{\log (a^n-1)}{\log( rad(a^n-1))}_{n \ge 1} \}$ is bounded. It seems we can't (?) even prove this very special case for a fixed $a=2$ say, and one key problem seems to be that we can't give a upper bound to the power that a prime can divide $a^n-1$. Using our identity , we can control the $\nu_p(a^n-1)-\nu_p(n)$ if we know a bound on the RHS which is independent of $n$. This leads us to defne for any integer $a>1$, its largest  Wieferich-order to be
$$W_a:=\sup_{p} \nu_p(a^{p-1}-1),$$ where the $sup$ is taken over all primes, and we  shall assume $W_a$ is bounded, which seems plausible. For infinitely many $a$ we don't even know if $W_a>1$. It is not known if $W_2>2$.  On the other hand, it is a century old theorem of Meyer \cite{Me} that for a fixed prime $p$, and an integer
$r>1$, and  $1 \le s \le r$, there exist $1<a <p^r$ with $\nu_p(a)=s$, so that $W_a$ can be  arbitrary large as $a$ varies, but this does not seem to contradict $W_a< \infty$ for any fixed $a$. We then have a very crude uniform bound $(a^n-1) < rad(a^n-1)^{\nu_p(n) +W_a}$,so that we have $X_a$ is bounded by $1+W_a$ over squarefree $n$.

We can refine the usual splitting of the factors of $a^n-1=m_1m_2$ where $m_1$ is squarefree and $m_2$ is the powerful part. Then we know $gcd(m_1,n)=1$ and $m_1$ is divisible by only non Wieferich primes. We also write $m_2=m_{2,1}m_{2,2}$
where $m_{2,1}$ is the part where  the prime divisor also divide $n$, then $m_{2,2}$ is divisible by only Wieferich primes. Also we split $m_{2,1}=m_Nm_W$ into parts divisible by non Wieferich and  Wieferich primes, we then have $m_N \le n.rad(n)$.
So we have $m=m_1m_Nm_Wm_{2,2}$  where the powerful part $m_W,m_{2,2}$ are divisible only by Wieferich primes which are rare, and
$m_N$ is bounded by a small quantity (compared to $m$). The largest part come from the squarefree part $m_1$ divisible only by non Wiefreich prime and there are known lower bound for $m_1$ in term of $m$. This gives us the intuition why $rad(m)$ is not much smaller than $m$,
as predicted by ABC.

\bibliographystyle{amsplain}

\end{document}